\documentclass{siamltex1213}
\usepackage{amsfonts,amsmath,amssymb,graphicx}
\usepackage{epstopdf}
\newcommand{\RR}{{\mathbb{R}}}
\newcommand{\CC}{{\mathbb{C}}}

\newcommand{\uu}{{\mathbf u}}
\newcommand{\ff}{{\mathbf f}}

\newtheorem{example}[theorem]{Example}


\begin{document}
\bibliographystyle{plain}
\pagestyle{myheadings}
\markboth{Davide Palitta and Valeria Simoncini}{matrix-equation strategies for convection-diffusion equations}

\title{matrix-equation-based strategies for convection-diffusion  
equations\thanks{Version of December 5, 2014.
This research is supported in part by the FARB12SIMO grant of the Universit\`a di Bologna.}
} 
\author{Davide Palitta\thanks{Dipartimento di Matematica, Universit\`a di Bologna, %
Piazza di Porta S. Donato, 5, I-40127 Bologna, Italy ({\tt davide.palitta3@unibo.it}).} 
\and Valeria Simoncini\thanks{Dipartimento di Matematica, Universit\`a di Bologna, 
Piazza di Porta S. Donato, 5, I-40127 Bologna, Italy ({\tt valeria.simoncini@unibo.it}).}
}
\maketitle
\begin{abstract}
We are interested in the numerical solution of nonsymmetric linear systems arising from the
discretization of 
convection-diffusion partial differential equations with separable coefficients and
dominant convection.
Preconditioners based on the matrix equation formulation
of the problem are proposed, which naturally approximate the original
discretized problem. For certain types of convection coefficients,
we show that the explicit solution of the matrix equation can effectively replace the linear
system solution. Numerical experiments with data stemming from two and three dimensional
problems are reported, illustrating the potential of the proposed methodology.
\end{abstract}

\begin{keywords}
Convection-diffusion equations, iterative solvers, preconditioning, matrix equations.
\end{keywords}

\begin{AMS}
65F10, 65F30  
\end{AMS}

\section{Introduction}\label{sec:intro}
We are interested in the numerical solution of the convection-diffusion 
partial differential equation
\begin{equation} \label{eqn:convdiff}
 -\epsilon\Delta u + \mathbf{w}\cdot\nabla u= f, \qquad \mbox{in}\quad \Omega \subset \mathbb{R}^d,
\end{equation}
with $d=2,3$,
where $\mathbf{w}$ is the {convection vector}, while $\epsilon$ is the {viscosity parameter}.
In particular, we consider the dominant convection case, that is $|\mathbf{w}|\gg \epsilon$,
and we assume that $\mathbf{w}$ is incompressible, that is $\mbox{div}(\mathbf{w})=0$. Moreover,
we assume that the components of $\mathbf{w}$ are separable functions in the space variables.
For simplicity the equation is equipped with Dirichlet boundary conditions; 
the analyzed procedures could be used with Neumann boundary conditions as well.

Standard finite difference or finite element discretizations 
yield the algebraic large nonsymmetric linear system
\begin{equation}\label{system}
A\uu =\ff ,
\qquad {\rm with} \quad A\in \mathbb{R}^{N\times N}.
\end{equation}
The discretization phase is crucial to obtain
a reliable numerical solution to (\ref{eqn:convdiff}), as spurious oscillations may
occur in the approximate solution in the dominant convection regime; see, e.g., \cite{H.C.ELMAN2001}. 
A variety of
strategies is available, which either take a fine enough discretization, or appropriately
modify the differential operator so as to limit
the convection side-effects \cite{ElmanetalbookIIed.2014}; see \cite{Stynes.13} for a recent essay on the subject. 
In our numerical experiments we used the former approach, though
the proposed methodology could also be adapted to handle, e.g., SUPG techniques \cite{Palitta.Ms}.

We are interested in exploring preconditioning strategies for solving 
(\ref{system}), that stem from knowledge
of the differential operator and its discretization. In particular, a computationally 
cheap approximation to the original differential operator will be employed, that
exploits the original matrix equation structure of the problem.
The idea of using a simplified operator as a preconditioner is well known
in the convection-diffusion equation literature. More precisely, classical strategies
use the symmetric part of the operator -- 
see, e.g., \cite{Elman.Schultz.86},\cite{Axelsson.Karatson.03}, and in particular
the discussion in \cite{Manteuffel.Otto.93} --
 due to the fact that
a symmetric problem is usually  cheaper to solve than the original nonsymmetric
system if fast solvers such as fast Fourier transform, cyclic reduction or multigrid can be employed.
In the case of nonsymmetric preconditioners, 
attempts have focussed on simplifying the operator so as to still use
fast solvers, and this is achieved, e.g., 
by imposing constant coefficients or by dropping some of the first order terms;
see, e.g., \cite{Axelsson.Karatson.07},\cite{Chinetal.84},\cite{Elman.Golub.90},\cite{Elman.Schultz.86}.
However, their effectiveness and computational
cost of their application have not been clearly
assessed. 
Finally, most publications aim at analyzing the two-dimensional (2D)
problem, whereas the three-dimensional (3D) problem represents a challenging
task, especially from a computational point of view.

We exploit the matrix structure of the discretized problem to construct a cost-effective nonsymmetric
approximation: the application of the new preconditioner amounts to solving
a Sylvester matrix equation, whose coefficient matrices are tightly related to
the original discretized problem. Other authors have used matrix equations either as
a solver or as a preconditioner  for the system stemming from (\ref{eqn:convdiff}); 
see, e.g., \cite{Wachspress.63}, \cite{Wachspress.84b}, \cite{Starke.91b}, \cite{Simoncini1996b}.
However, on the one hand, only very simplified models have been considered, on the other
hand, to the best of our knowledge
no performance evaluation has ever been performed with respect to more
common multilevel techniques.
We derive a new preconditioning operator by first writing down the
full multiterm matrix equation corresponding to finite difference discretization
of the {two or three-dimensional problem and then by appropriately simplifying the operator; then we
iteratively solve}
the associated Sylvester equation so as to
achieve good performance of the overall preconditioning phase.
 The choice of the Sylvester equation solver is crucial for assessing the
preconditioning costs. In the literature, ADI and cheap solvers for the Kronecker formulation
 were suggested for various (e.g. symmetric)
variants of the problem \cite{Chinetal.84},\cite{Gunn.64},\cite{Wachspress.84b}; here
we exploit a recently developed efficient iterative solver for the Sylvester equation, whose
cost per inner iteration for a 2D problem
may be as low as ${\cal O}(n)$, where $n$ is the one-dimensional problem size; this solver allows us to
preserve the leading coefficients of the two first order derivatives without
losing efficiency.

Our preliminary numerical results are promising and clearly illustrate the potential
of this approach: the overall solution compares rather well with state-of-the-art
and finely tuned algebraic multigrid preconditioners. When used as a solver, in both
2D and 3D problems, its performance is superior to other available approaches.
Finally, our numerical experiments confirm theoretical results in the literature on the
robustness of the approach with respect to the mesh parameter, namely 
the number of iterations of the system solver is essentially independent of the problem size.

Our aim is to derive a proof-of-concept preconditioning procedure for the finite 
difference discretization of the problem on rectangular and parallelepipedal domains,
 as a first step towards its higher level development for the
finite element discretization on more general domains.

Here is a synopsis of the paper. In section~\ref{sec:matrixform} we derive the matrix formulation of
the two-dimensional discretized problem, from which the standard form (\ref{system}) is derived by means of
Kronecker product expansion. Section~\ref{sec:bc} describes how the boundary conditions can be
embedded in the matrix formulation. Section~\ref{sec:precond} uses the previously derived form
to define the preconditioning operator and its associated Sylvester matrix equation, while 
implementation details are given in section~\ref{sec:implementation}. The procedure is then
generalized to three dimensional problems in section~\ref{sec:3D}, while numerical experiments
with systems stemming from
both two and three dimensional data are reported in section~\ref{sec:expes}. Finally, our
conclusions are given in section~\ref{sec:conclusions}.


\section{A matrix oriented formulation}\label{sec:matrixform}
In this section we reformulate the algebraic problem (\ref{eqn:convdiff}) in terms of a
multiterm linear {matrix} equation. This derivation will be used to introduce our
preconditioner, to be applied to a Krylov subspace method as an acceleration strategy
for solving (\ref{system}).
For the ease of presentation, we shall first concentrate on the two-dimensional
problem, and then extend our derivation to the three-dimensional case in section \ref{sec:3D}.

We start by recalling the matrix equation associated with the discretization  by 
five-point stencil finite differences
of the Poisson equation $-\Delta u = f$
on a rectangular domain $\Omega\subset \RR^2$. For the sake of simplicity,
we shall assume that $\Omega=(0,1)^2$. 
Let $\Omega_h$ be a uniform discretization of $\Omega$, with nodes  $(x_i,y_j)$,
$i,j=1, \ldots, n-1$. Then assuming homogeneous Dirichlet boundary conditions are
used, centered finite difference discretization leads to the linear system (\ref{system})
with
$$
A = T_{n-1} \otimes I_{n-1} + I_{n-1}\otimes T_{n-1},
$$ 
and $T_{n-1} = {\rm tridiag}(-1, \underline{2},-1)\in\RR^{(n-1)\times (n-1)}$ is the 
symmetric tridiagonal matrix approximating the second-order derivative in one-dimension, 
while the entries of $\uu$ contain an approximation to $u$ at the nodes, having used
a lexicographic order of the entries. 

We thus take a step back, and describe in details the process leading to the Kronecker
formulation, with the aim of deriving its matrix counterpart. This description will
allow us to also include the boundary conditions in a systematic manner.

Let $\bar\Omega_h$ be a uniform discretization of the closed domain $\bar \Omega$,
with equidistant points in each direction, $(x_i,y_j)$, $i,j=0, \ldots, n$.
Analogously, $U_{i,j} = U(x_i, y_j)$ is the value of the
approximation $U$ to $u$ at the nodes.
For each $i,j=1, \ldots, n-1$ we have the usual approximations
$$
u_{xx}(x_i,y_j) \approx \frac{U_{i-1,j} - 2 U_{i,j} + U_{i+1,j}}{h^2} =
\frac 1 {h^2} [1, -2, 1] \begin{bmatrix} U_{i-1,j} \\ U_{i,j} \\ U_{i+1,j} \end{bmatrix} ,
$$
and analogously for the $y$ direction, but from the right,
$$
u_{yy}(x_i,y_j) \approx \frac{U_{i,j-1} - 2 U_{i,j} + U_{i,j+1}}{h^2}
=
\frac 1 {h^2} [U_{i,j-1}, \, U_{i,j},\,  U_{i,j+1} ]  \begin{bmatrix} 1 \\ -2 \\ 1 \end{bmatrix} .
$$
Let
\begin{equation}\label{def_T}
T=-\frac{1}{h^2}\left(\begin{array}{cccccc}
* & * & & & &\\
* & -2 & 1 & & &\\
& 1 & \ddots & \ddots & &\\
&& \ddots & \ddots & 1 &   \\
&& & 1 & -2 & * \\
&&& & * & *\\
\end{array}\right)\in\mathbb{R}^{(n+1)\times (n+1)}; 
\end{equation}
the unspecified values ``*'' are associated with boundary values of $U$ and will be discussed
in section \ref{sec:bc}.
Collecting these relations for all rows $i$'s and for all columns $j$'s, for the whole domain we obtain
$$
-u_{xx} \approx  T U, \qquad
-u_{yy} \approx  U T.
$$
With these approximations we can write  the following classical matrix form of the
finite difference discretization of the Poisson equation on a square domain
(see, e.g., \cite{Wachspress1966})
\begin{eqnarray}\label{eqn:matrixLapl}
TU+UT=F, \qquad {\rm where} \quad F_{i,j} = f(x_i,y_j) + b.c..
\end{eqnarray}
Except for the boundary conditions, 
the Kronecker formulation of (\ref{eqn:matrixLapl}) gives the same form as (\ref{system}).

For the convection-diffusion equation with separable coefficients
 a similar derivation provides a multiterm
linear matrix equation. We state the result in the following proposition, where
separable convection coefficients are assumed. To this end, we define the
matrix 
\begin{eqnarray}\label{eqn:B}
B=\frac{1}{2h}
\begin{bmatrix}
* & * & & &&\\
* & 0 & 1 && &\\
  & -1 & \ddots &\ddots&\\
& & \ddots & \ddots &  1 & \\
 & &  & -1  & 0 &* \\
& && & * & *\\
\end{bmatrix}  \in \RR^{(n+1)\times (n+1)},
\end{eqnarray}
which represents the centered finite difference approximation of the first
order one dimensional (1D) derivative
on a uniformly discretized interval.

\begin{proposition}\label{prop:matrixform2D}
Assume that the convection vector $\mathbf{w} = (w_1,w_2)$ satisfies
$w_1=\phi_1(x)\psi_1(y)$ and $w_2=\phi_2(x)\psi_2(y)$. Let
$(x_i, y_j)\in\bar\Omega_h$, $i,j=0, \ldots, n$
and set
$\Phi_k={\rm diag}(\phi_k(x_0), \ldots, \phi_k(x_n))$ and
$\Psi_k={\rm diag}(\psi_k(y_0), \ldots, \psi_k(y_n))$, $k=1,2$.
Then with the previous notation, the centered finite-difference discretization of
the differential operator in (\ref{eqn:convdiff}) leads to the following operator:
\begin{equation}\label{eqn:matrix1}
{\cal L}_h\, : \, U \to \epsilon TU+\epsilon UT + (\Phi_1B)U\Psi_1+\Phi_2U(B^T\Psi_2) . 
\end{equation}
\end{proposition}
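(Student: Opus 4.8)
The plan is to mimic, term by term, the derivation already carried out for the Poisson equation in (\ref{eqn:matrixLapl}), now applied to the full operator $-\epsilon\Delta u + \mathbf{w}\cdot\nabla u$ with the separability hypothesis $w_1=\phi_1(x)\psi_1(y)$, $w_2=\phi_2(x)\psi_2(y)$. First I would split the operator into its four natural pieces: the two second-order terms $-\epsilon u_{xx}$ and $-\epsilon u_{yy}$, and the two first-order terms $w_1 u_x$ and $w_2 u_y$. The diffusion part is already handled: by the computation preceding the proposition, $-u_{xx}\approx TU$ and $-u_{yy}\approx UT$ at the interior nodes, so these contribute $\epsilon TU+\epsilon UT$, which is exactly the first two terms of (\ref{eqn:matrix1}).

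Next I would treat the convection terms. For $w_1 u_x$, the centered difference of $u_x$ at $(x_i,y_j)$ is $\frac{1}{2h}(U_{i+1,j}-U_{i-1,j})$, which, collected over all interior $i$, is the $(i,j)$ entry of $BU$ (left multiplication, since the $x$-derivative acts along columns of $U$ in our convention). Multiplying pointwise by $w_1(x_i,y_j)=\phi_1(x_i)\psi_1(y_j)$ is a row scaling by $\phi_1(x_i)$ and a column scaling by $\psi_1(y_j)$; the row scaling is left multiplication by $\Phi_1$ and the column scaling is right multiplication by $\Psi_1$. Hence the discrete analogue of $w_1 u_x$ is $\Phi_1 (B U)\Psi_1 = (\Phi_1 B)U\Psi_1$, the third term. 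Symmetrically, $u_y$ at $(x_i,y_j)$ is $\frac{1}{2h}(U_{i,j+1}-U_{i,j-1})$, which collected over all interior $j$ is the $(i,j)$ entry of $U B^T$ (right multiplication, with the transpose because the difference stencil acts from the right, exactly as in the $-u_{yy}\approx UT$ case); pointwise multiplication by $w_2(x_i,y_j)=\phi_2(x_i)\psi_2(y_j)$ then gives $\Phi_2(UB^T)\Psi_2 = \Phi_2 U(B^T\Psi_2)$, the fourth term. Summing the four contributions yields precisely ${\cal L}_h$ as claimed.

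The only genuinely delicate point — and the one I would single out as the main obstacle — is bookkeeping the boundary rows and columns consistently. The matrices $T$ and $B$ are defined as $(n+1)\times(n+1)$ with unspecified entries ``$*$'' in the first and last rows, so the identities $-u_{xx}\approx TU$ etc.\ are, strictly speaking, statements about the interior block of indices $i,j=1,\dots,n-1$, and one must check that left/right multiplication by the full $T$, $B$, $\Phi_k$, $\Psi_k$ reproduces the correct interior stencil without the ``$*$'' entries contaminating it; the resolution is simply that the proposition only asserts the form of the operator, deferring the precise treatment of the ``$*$'' entries and the boundary data to section~\ref{sec:bc}, so at this stage one just needs the interior-node identities, which follow from the elementary one-dimensional difference formulas already displayed. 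The rest is the routine verification that the ordering of factors (left multiplication for $x$-derivatives, right multiplication for $y$-derivatives, and the transpose on $B$ for the $y$-direction) matches the convention fixed by the Poisson case, together with the observation that a separable coefficient $\phi(x)\psi(y)$ evaluated on the grid factors as $\Phi$ (on the left) times $\Psi$ (on the right) precisely because the grid is a tensor product.
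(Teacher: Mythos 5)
Your proposal is correct and follows essentially the same route as the paper's proof: the diffusion terms are inherited from the Poisson derivation, and each convection term is obtained from the centered-difference matrix $B$ acting from the left for the $x$-derivative and from the right (as $B^T$) for the $y$-derivative, with the separable coefficient $\phi_k(x_i)\psi_k(y_j)$ realized as the left diagonal scaling $\Phi_k$ and the right diagonal scaling $\Psi_k$. The paper phrases this entry-by-entry via the stencil vectors $[-1,0,1]$ and then ``collects'' over the grid, whereas you state the matrix products directly, but the computation is identical, and your remark that the ``$*$'' entries are deferred to the boundary-condition section matches the paper's treatment.
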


\begin{proof}
The first two terms of ${\cal L}_h(U)$ were derived for (\ref{eqn:matrixLapl}).
We are left with showing
that the first order term can be expressed in terms of the 1D discretization
matrix $B$ in (\ref{eqn:B}). We have
\begin{eqnarray*} \label{primomatrix}
\phi_1(x_i)\psi_1(y_j) u_x(x_i,y_j) & \approx &
\phi_1(x_i)\frac{u(x_{i+1},y_j)-u(x_{i-1},y_j)}{2h} \psi_1(y_j) \\
&=& \frac 1 {2h}\phi_1(x_i) [ -1, 0, 1] 
\begin{bmatrix} U_{i-1,j} \\ U_{i,j} \\ U_{i+1,j} \end{bmatrix} \psi_1(y_j) ,
\end{eqnarray*}
and analogously,
$$
\phi_2(x_i)\psi_2(y_j)u_y(x_i,y_j)  \approx
\frac 1 {2h}\phi_2(x_i) 
[ U_{i,j-1}, U_{i,j}, U_{i,j+1}] \begin{bmatrix} -1 \\ 0 \\ 1 \end{bmatrix} \psi_2(y_j) .
$$
Collecting these results for all grid nodes and recalling that
$U_{i,j} = U(x_i, y_j)$, we obtain
$$
\left(\phi_1(x_i)\psi_1(y_j) u_{x}(x_i,y_j)\right )_{i,j=0,\ldots, n} \approx 
\Phi_1 B U \Psi_1, 
$$
$$
\left(\phi_2(x_i)\psi_2(y_j) u_{y}(x_i,y_j) \right )_{i,j=0,\ldots, n} \approx 
\Phi_2  U B^T \Psi_2,
$$
and the result follows.
\end{proof}

The operator in (\ref{eqn:matrix1}) is a linear multiterm matrix function, and it is
a general variant of the matrix equations already present in the early 
literature on difference methods \cite{Bickley.McNamee.60}. 
This type of equations now often arises in the discretization
of partial differential equations with stochastic terms; see \cite{Simoncini.survey13} for
a description of this and other contexts where these operators arise.
In the deterministic setting, however, the difficulty of explicitly dealing
with all terms have led the scientific community to abandon this form, in
favor of the Kronecker formulation.

\section{Imposing the boundary conditions}\label{sec:bc}
The algebraic problem needs to be completed by imposing the boundary values. These will
fill up the undefined entries in the coefficient matrices, and in the right-hand side matrix.
To this end, we recall that with the given ordering of the elements in $U$, the first and
last columns, $Ue_1$ and $U e_{n+1}$ resp., 
correspond to the boundary sides $y=0$ and $y=1$, whereas the first and
last rows take up the values at the boundary sides $x=0$ and $x=1$. 
With this notation, we wish to complete the corners of the matrices $T$ and $B$, giving rise
to the matrices $T_1, T_2$ and $B_1, B_2$, respectively, so that the following matrix equation
is well defined for $(x_i, y_j) \in \bar \Omega_h$:
\begin{eqnarray}\label{eqn:main}
\epsilon T_1 U + \epsilon U T_2 + \Phi_1B_1 U \Psi_1 + \Phi_2 U B_2\Psi_2 = F .
\end{eqnarray}
With the same notation as for $U$, the entries of $F$ corresponding to $i, j \in \{0,n\}$
will contain contributions from the boundary values of $U$, which are determined next.

For the boundary conditions to be satisfied, the operator ${\cal L}_h (U) =
\epsilon T_1 U + \epsilon U T_2 + \Phi_1B_1 U \Psi_1 + \Phi_2 U B_2\Psi_2$ should act
as the (scaled) identity operator for points at the boundary. To this end, from the generic
matrix $T$ we define the matrix $T_1$ as follows:
$$
T_1=-\frac{1}{h^2}\left(\begin{array}{cccccc}
-1 & 0 & & &&\\
1 & -2 & 1 & &&\\
& 1 & -2 & 1& &\\
& & \ddots & \ddots & \ddots & \\
& & & \ddots& \ddots& 1\\
&&& & 0 & -1\\
\end{array}\right)\in\mathbb{R}^{(n+1)\times (n+1)},
$$
%
while the matrix corresponding to the first order operator ($B$ in the generic case) can be
written as
\begin{eqnarray*}
B_1=\frac{1}{2h}\left(\begin{array}{cccccc}
0 & 0 & & &&\\
-1 & 0 & 1 & &&\\
& -1 & 0 &1 & &\\
& & \ddots & \ddots & \ddots & \\
& & & -1& 0 & 1\\
&&& & 0 & 0\\
\end{array}\right)\in\mathbb{R}^{(n+1)\times (n+1)}.
\end{eqnarray*}
%
%
%
For the derivative in the $y$ direction, right multiplication should also act like the
identity, therefore we can define $T_2=T_1^T$ and correspondingly, $B_2=B_1^T$.
We are thus ready to define the missing entries in $F$ so that (\ref{eqn:main}) holds. 
For the first column, that is for the side $y=0$, we write
\begin{eqnarray*}
F e_1 & = & (\epsilon T_1 U + \epsilon U T_2 + \Phi_1B_1 U \Psi_1 + \Phi_2 U B_2\Psi_2)e_1 \\
& = & \epsilon T_1 U e_1 + \epsilon U T_2 e_1 + \Phi_1B_1 U \Psi_1 e_1 + \Phi_2 U B_2\Psi_2 e_1 \\
& = & \epsilon T_1 U e_1 + \frac{\epsilon}{h^2} U e_1 + \Psi_1(y_0) \Phi_1B_1 U e_1   ,
\end{eqnarray*}
where we used the fact that $B_2\Psi_2 e_1 = 0$.
 Similar reasonings ensure that the boundary values at $y=1$ are imposed, thus
defining $F e_{n+1}$.
For the side $x=0$ we have
\begin{eqnarray*}
e_1^T F &=&
 e_1^T(\epsilon T_1 U + \epsilon U T_2 + \Phi_1B_1 U \Psi_1 + \Phi_2 U B_2\Psi_2)=e_1^TF \\
&=& e_1^T\epsilon T_1 U + e_1^T \epsilon U T_2 +  e_1^T\Phi_1B_1 U \Psi_1 +  e_1^T\Phi_2 U B_2\Psi_2 \\
&=& \frac{\epsilon}{h^2} e_1^T U + \epsilon e_1^T U T_2 + \phi_2(x_0) e_1^T U B_2\Psi_2 ,
\end{eqnarray*}
where the fact that $e_1^T \Phi_1 B_1 = 0$ was used. The definition of $e_{n+1}^TF$ follows
analogously.

{We stress that maintaining the same boundary conditions in the 
designed preconditioner has other benefits, in addition to accurately reproducing
the original operator. Indeed, it was shown in  \cite{Manteuffel.Parter.90} that
if the preconditioner is itself a matrix arising from the discretization of
an elliptic operator, then under certain regularity assumptions
the $L_2$-norm of the preconditioned matrix $A {\cal P}^{-1}$ is uniformly bounded with
respect to the discretization parameter.}

\section{The new preconditioner}\label{sec:precond}
The main obstacle in directly solving equation (\ref{eqn:matrix1}) is given by
its many general terms. Indeed, had the equation terms with coefficient
matrices on either side of the unknown matrix, the problem
would be readily recognized as a matrix Sylvester equation.
This scenario does occur whenever some of the
convection terms have a simplified structure, so that those terms only depend on some of
the variables. This is the case, for instance, with the operator ${\cal L}(u)=-\Delta u + \phi_1(x) u_x$,
which gives rise to ${\cal L}_h(U) = T_1 U + U T_2 + \Phi_1 B_1 U = (T_1+\Phi_1 B_1) U + U T_2$.
In this case, computational strategies based on the matrix equation
can be employed; see, e.g., \cite{Wachspress.63},\cite{Starke.91b}, and \cite{Ellner1986} and its
references for some early methods; 
see also section~\ref{sec:expes} for some examples with state-of-the-art approaches.
In the general multiterm case, a simplified 
version of the left-hand side in (\ref{eqn:matrix1}) can be used as an
acceleration operator, in the form of a matrix equation solver; a similar
strategy was adopted in \cite{Chinetal.84}, with encouraging numerical results
with early solution methods.
%
Let 
$$
{\cal L}_h : Y \mapsto \epsilon T_1Y+\epsilon YT_2 + (\Phi_1B_1)Y\Psi_1+\Phi_2Y(B_2\Psi_2)
$$
be the operator associated with (\ref{eqn:main}).
Then ${\cal L}_h$ can be  approximated by replacing two of the diagonal coefficient matrices, namely
\begin{equation}\label{eqn:means}
\Psi_1\approx {\bar \psi}_1I, \qquad \Phi_2\approx {\bar\phi}_2I,
\end{equation}
where the scalars ${\bar \psi}_1$ and ${\bar\phi}_2$ represent some average of the functions
$\psi_1$ and $\phi_2$ on the given domain. In our experiments we
used 
$$
{\bar \psi}_1:=\frac 1 {n+1} \sum_i \psi_1(x_i), \qquad 
{\bar \phi}_2:=\frac 1 {n+1} \sum_i \phi_2(y_i),
$$
but other strategies may be considered. Substituting the approximations (\ref{eqn:means}) in
${\cal L}_h$, the matrix $Y$ can be collected, yielding the following 
approximation to ${\cal L}_h$:
\begin{equation} \label{Sylv}
{\cal P} : Y \mapsto (\epsilon T+\bar\psi_1\Phi_1B)Y+Y(\epsilon T+\bar\phi_2 B^T \Psi_2) .
\end{equation}
The use of ${\cal P}$ in an acceleration context consists of applying ${\cal P}^{-1}$ to a
given matrix $G$. This corresponds to solving the following Sylvester equation
\begin{eqnarray}\label{eqn:Sylv}
{\cal P}(Y) = G \qquad \Leftrightarrow \qquad
(\epsilon T_1+\bar\psi_1\Phi_1B_1)Y+Y(\epsilon T_2+B_2\Psi_2\bar\phi_2) = G .
\end{eqnarray}
This matrix equation has a unique solution for any $G\ne 0$ if and only if
the spectra of $\epsilon T_1+\bar\psi_1\Phi_1B_1$ and $-(\epsilon T_2+B_2\Psi_2\bar\phi_2)$
have no common eigenvalues. This is ensured for instance
if both coefficient matrices in (\ref{eqn:Sylv}) have eigenvalues in $\CC^+$.
The numerical solution of (\ref{eqn:Sylv}) will be discussed in section \ref{sec:implementation}.

We would like to point out that
the approximation in (\ref{eqn:means}) corresponds to introducing the following
modified convection vector
\begin{equation}\label{vect}
\widetilde {\mathbf{w}}:=(\bar\psi_1\phi_1(x),\bar\phi_2\psi_2(y))\approx \mathbf{w}.
\end{equation}
so that the continuous operator ${\cal L}(u)=-\epsilon\Delta u +\mathbf{w}\cdot\nabla u$ is approximated by 
$\widetilde {\cal L}(u) = -\epsilon\Delta u + \widetilde {\mathbf{w}}\cdot\nabla u$.
Similar preconditioning strategies relying on simplified (non-self-adjoint)
versions of the operator $\cal L$ have been developed in the past, see, e.g., 
\cite{Axelsson.Karatson.07}, \cite{Chinetal.84},
\cite{Elman.Golub.90}, \cite{Elman.Schultz.86};
some of them use a matrix equation oriented formulation, which allows one to
significantly lower the computational cost, compared with the Kronecker form, while
keeping non-constant coefficients.
In all these cases, however, no performance assessment has been clearly analyzed,
and in fact, the overall cost heavily depends on the  complexity of the method used
to apply the preconditioner. 
 By employing a matrix equation formulation, the computational cost is in general
significantly lower than if one were to solve at each iteration a nonsymmetric
linear system that is very close to the original one (unless special strategies
can be designed for the latter), see, e.g., the discussion 
in \cite[page 1514]{Axelsson.Karatson.07}. 
We refer to \cite{Palitta.Ms} for experimental
comparisons with inner-outer procedures that explore incomplete LU preconditioners.

In the continuous case, our strategy determines a preconditioner that is\footnote{Roughly 
speaking, two elliptic operators are compact -- equivalent if their principal parts
coincide up to a constant factor, and they have homogeneous Dirichlet conditions
on the same part of the boundary.} ``compact -- equivalent'' to the
original operator $\cal L$ \cite[Proposition 3.5]{Axelsson.Karatson.07}. In particular, this
property implies that if one were to use the conjugate gradient method to the
normal equation associated with our preconditioned problem, its convergence rate would
be bounded by a quantity that is independent of the mesh parameter 
\cite[Theorem 4.7]{Axelsson.Karatson.07}.
In our numerical experiments we did not employ the normal equation, but we can still
empirically infer that mesh independence is maintained. 
Finally, we should mention that this property does not necessarily imply that the preconditioner
is good, that is that a low number of iterations is obtained.

\subsection{Implementation details}\label{sec:implementation}
Since $A$ in (\ref{system}) is nonsymmetric, the Krylov subspace solver GMRES (\cite{SaadJuly1986})
for nonsymmetric problems can be used, and 
the operator preconditioner ${\cal P}^{-1}$ is applied from the right.
In a standard implementation, at each iteration $k$ of GMRES the preconditioner is
applied to a vector as $y_k = {\cal P}^{-1} g_k$, where $g_k$ is a vector with $N=(n+1)^2$ components.
In our matrix strategy, we first transform the vector $g_k$ into the matrix $G_k$ such
that $g_k = {\rm vec}(G_k)$, and then obtain $y_k = {\rm vec}(Y_k)$ as
the solution to the Sylvester equation
\begin{equation}\label{syilv_it}
(\epsilon T_1+\bar\psi_1\Phi_1B_1)Y+Y(\epsilon T_2+B_2\bar\phi_2\Psi_2) = G_k , 
\end{equation}
where the coefficient matrices have both dimensions $n+1$. 
For $n$ up to a few hundreds, this equation may be efficiently solved by means of
the Bartels-Stewart method \cite{Bartels.Stewart.72}, whose computational cost
is ${\cal O}((n+1)^3)$. This cost should be compared with the cost of solving an
inner system of size $(n+1)^2\times (n+1)^2$ by either a sparse direct solver, or by
a fast solver if applicable.
If a different number of nodes is used in each direction, or a three-dimensional
problem is solved, then the two coefficient
matrices will have different size, without any significant impact on the solution 
process, though the cost will change accordingly.

For larger $n$, the application of the Bartels-Stewart method may become too expensive to
be competitive, therefore an iterative method should be employed. In this case, the
Sylvester equation is solved up to a certain tolerance, so that only an
approximation to $Y$ is obtained. To make the strategy cost and memory
effective when using state-of-the-art iterative methods, the right-hand side should be low rank.
Therefore, $G_k$ is also approximated by a low-rank matrix by means of a truncated
singular value decomposition. The overall effectiveness of the acceleration
process thus depends on how much information the truncation retains, and how
accurate the iterative solution will be. These require the tuning of
three parameters (the truncation and stopping tolerances, and the maximum rank allowed for $G_k$)
that we set a-priori in all our numerical experiments: 
{\tt tol\_truncation}=$10^{-2}$, {\tt tol\_inner}=$10^{-4}$, {\tt r\_max}=$10$. 
The performance
did not seem to be influenced by small variations of the values of these parameters.

Finally, we mention that if an iterative solver is used for (\ref{eqn:Sylv}),
then the preconditioning step makes the overall process a nonlinear operation, as 
the inexact operator ${\cal P}^{-1}$ ``changes'' at each system solver iteration. As a consequence,
the flexible version of GMRES, named FGMRES in the following \cite{Saad1993}, is used, giving rise to
an inner-outer iteration.

The iterative solution of the Sylvester equation is performed by means of the Extended
Krylov subspace method (KPIK), originally proposed in \cite{Simoncini2007b} for the
Lyapunov equation, and then adapted to solving the Sylvester equation in \cite{Breitenetal.2014}.
The relative residual norm is used as stopping criterion, and the residual is checked
at every other iteration, as suggested in \cite{Simoncini2007b}.
Other efficient Sylvester equation solvers could be used, see the recent survey \cite{Simoncini.survey13}.
 However, in a preconditioning framework, the fact that the coefficient matrices can be factorized
once for all makes KPIK very appealing in terms of computational costs; see \cite{Simoncini2007b}
for further details.

\section{The three-dimensional case}\label{sec:3D}
The 3D convection-diffusion equation can be stated as in (\ref{eqn:convdiff}),  for
$\Omega\subset \RR^3$.
To convey our idea, we again first focus on the Poisson equation, and then generalize the
matrix formulation to the finite difference discretization of the non-self-adjoint problem (\ref{eqn:convdiff}).
For the sake of simplicity, we shall 
assume that $\Omega=(0,1)^3$, though more general parallelepipedal domains 
could also be considered.
We discretize $\bar\Omega$ with equidistant nodes in each direction, $(x_i,y_j,z_k)$, 
for $i,j, k=0, \ldots, n$. To fix the ideas, 
let \mbox{$U_{i,j}^{(k)} = U(x_i, y_j,z_k)$} denote the value of the
approximation $U$ to $u$ at the node $(x_i, y_j, z_k)$ (other orderings may be
more convenient depending on the equation properties). We also define the tall matrix
$$ 
\mathcal{U}=\left[\begin{array}{c}
U^{(0)} \\
\vdots\\
U^{(n)}\\
\end{array}\right]=\sum_{k=0}^{n}(e_{k+1}\otimes U^{(k)})\in\mathbb{R}^{(n+1)^2\times (n+1)} ,
$$
where 
$e_{j}$ is the $j$th canonical vector of $\mathbb{R}^{n+1}$.

Let $T$ be as defined in (\ref{def_T}). Then, for $I\in\RR^{(n+1)\times (n+1)}$ the identity matrix,
\begin{eqnarray*}
-u_{xx} &\approx&  \sum_{k=0}^{n}(e_{k+1}\otimes T U^{(k)})=(I\otimes T)\sum_{k=0}^{n}(e_{k+1}\otimes U^{(k)})=
(I\otimes T)\mathcal{U}, \\
-u_{yy} &\approx&  \sum_{k=0}^{n}(e_{k+1}\otimes U^{(k)} T)=\sum_{k=0}^{n}(e_{k+1}\otimes U^{(k)} ) T=
\mathcal{U} T, \\
-u_{zz} &\approx& (T\otimes I)\mathcal{U} .
\end{eqnarray*}

With these approximations we can thus obtain the following matrix form of the
finite difference discretization of the Poisson equation:
\begin{equation}\label{laplacian3D}
(I\otimes T)\mathcal{U}+\mathcal{U} T+(T\otimes I)\mathcal{U}=F,
\end{equation}
where $F=\sum_{k=0}^{n}(e_{k+1}\otimes F^{(k)})\in\mathbb{R}^{(n+1)^2\times (n+1)}$ and 
$(F^{(k)})_{i,j}=f(x_i,y_j,z_k)$.
The Kronecker formulation of the matrix equation (\ref{laplacian3D}) determines  the usual
approximation of the Laplacian operator by seven-point stencil finite differences,
$$
\Delta\approx I\otimes I \otimes T+ I\otimes T\otimes I + T\otimes I\otimes I \in 
\mathbb{R}^{(n+1)^3\times (n+1)^3}.
$$
For the convection-diffusion equation with separable coefficients
 a similar derivation provides a multiterm
linear matrix equation. We state the result in the following proposition.

\begin{proposition}
Assume that the convection vector $\mathbf{w} = (w_1,w_2,w_3)$ satisfies
$w_1=\phi_1(x)\psi_1(y)\upsilon_1(z)$, 
$w_2=\phi_2(x)\psi_2(y)\upsilon_2(z)$, and $w_3=\phi_3(x)\psi_3(y)\upsilon_3(z)$. Let
$(x_i, y_j, z_k)$, $i,j,k=0, \ldots, n$ be the grid nodes discretizing $\bar \Omega$ with mesh size $h$, 
and set
$\Phi_\ell={\rm diag}(\phi_\ell(x_0), \ldots, \phi_\ell(x_n))$, 
$\Psi_\ell={\rm diag}(\psi_\ell(y_0), \ldots, \psi_\ell(y_n))$, and
$\Upsilon_\ell={\rm diag}(\upsilon_\ell(z_0), \ldots, \upsilon_\ell(z_n))$, $\ell=1,2,3$.

Then, with $B$ as defined in (\ref{eqn:B}), the centered finite-difference discretization of
the differential operator in (\ref{eqn:convdiff}) leads to the following operator:
{\small
\begin{equation}\label{matrix3D}
{\cal L}_h \, : \, U \to (I\otimes T)\mathcal{U}+\mathcal{U}T+(T\otimes I)\mathcal{U}+(\Upsilon_1\otimes\Phi_1B)
\mathcal{U}\Psi_1+(\Upsilon_2\otimes\Phi_2)\mathcal{U} B^T\Psi_2+
[(\Upsilon_3 B)\otimes\Phi_3]\mathcal{U}\Psi_3. 
\end{equation}}
\end{proposition}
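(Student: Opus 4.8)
The plan is to mirror the two-dimensional derivation of Proposition~\ref{prop:matrixform2D}, handling the diffusion terms by the explicit computation already displayed in (\ref{laplacian3D}), and then treating each of the three convection terms in turn. The main new ingredient is the bookkeeping induced by the tall-matrix representation $\mathcal{U}=\sum_{k=0}^{n}(e_{k+1}\otimes U^{(k)})$: a multiplication acting ``on the $z$ slices'' becomes a left multiplication by a Kronecker factor of the form $(\,\cdot\,)\otimes I$ or $(\,\cdot\,)\otimes(\,\cdot\,)$, a multiplication acting on the $x$ index stays a left multiplication by $I\otimes(\,\cdot\,)$, and a multiplication on the $y$ index becomes a right multiplication. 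So before touching the convection terms I would record the three elementary identities
\begin{equation*}
(M\otimes I)\mathcal{U}=\sum_{k}(Me_{k+1})\otimes U^{(k)}, \quad
(I\otimes N)\mathcal{U}=\sum_{k}e_{k+1}\otimes(NU^{(k)}), \quad
\mathcal{U}P=\sum_{k}e_{k+1}\otimes(U^{(k)}P),
\end{equation*}
valid for any conformable $M,N,P$, together with their composition $(M\otimes N)\mathcal{U}P=\sum_k (Me_{k+1})\otimes(NU^{(k)}P)$. These are exactly the manipulations already used to pass from the componentwise approximations of $-u_{xx},-u_{yy},-u_{zz}$ to (\ref{laplacian3D}).

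Next I would discretize each convection term pointwise, exactly as in the proof of Proposition~\ref{prop:matrixform2D}. For the first term, $w_1 u_x=\phi_1(x)\psi_1(y)\upsilon_1(z)\,u_x$, the centered difference in $x$ gives $\phi_1(x_i)\frac{U_{i+1,j}^{(k)}-U_{i-1,j}^{(k)}}{2h}$, i.e. a left multiplication of each slice $U^{(k)}$ by $\Phi_1 B$; the factor $\psi_1(y_j)$ is a right multiplication by $\Psi_1$; and the factor $\upsilon_1(z_k)$ scales the $k$th slice, which by the first identity above is a left multiplication by $\Upsilon_1\otimes I$. Composing, the $z$-Kronecker factor and the $x$ factor combine into $\Upsilon_1\otimes(\Phi_1 B)$, and collecting over all nodes yields $(\Upsilon_1\otimes\Phi_1 B)\,\mathcal{U}\,\Psi_1$, the first convection term in (\ref{matrix3D}). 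The second term $w_2 u_y$ is handled the same way: the $y$-derivative becomes a right multiplication by $B^T$, the $x$-coefficient $\phi_2$ a left multiplication by $I\otimes\Phi_2$, and the $z$-coefficient $\upsilon_2$ a left multiplication by $\Upsilon_2\otimes I$; composing gives $(\Upsilon_2\otimes\Phi_2)\,\mathcal{U}\,B^T\Psi_2$. For the third term $w_3 u_z$, the $z$-derivative itself becomes a left multiplication by $B\otimes I$ (this is the analogue of $-u_{zz}\approx(T\otimes I)\mathcal{U}$), the coefficient $\upsilon_3(z_k)$ contributes another factor on the $z$ index so that the two combine into $(\Upsilon_3 B)\otimes I$, the coefficient $\phi_3$ gives $I\otimes\Phi_3$ on the left, and $\psi_3$ gives $\Psi_3$ on the right; composing yields $[(\Upsilon_3 B)\otimes\Phi_3]\,\mathcal{U}\,\Psi_3$. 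Adding these three terms to the diffusion part (\ref{laplacian3D}) multiplied by $\epsilon$ (note the proposition states the discretization of the operator, so the $\epsilon$ on the diffusion terms is carried implicitly as in (\ref{eqn:matrix1})) produces (\ref{matrix3D}) exactly.

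I expect the only real subtlety — not a hard theorem, but the place where care is needed — to be the interaction between the diagonal coefficient matrices and the direction in which the difference operator acts, in particular the order in which Kronecker factors accumulate on the $z$ side: one must check that $\Upsilon_\ell$ and a $z$-difference $B$ multiply as $(\Upsilon_3 B)\otimes(\cdot)$ and not $(B\Upsilon_3)\otimes(\cdot)$, which follows because evaluating $\upsilon_3$ at the grid node and then differencing is the composition ``difference after diagonal scaling'' read from the slice index, matching the pointwise product $\upsilon_3(z_k)\,u_z(z_k)$; since the paper uses centered (rather than one-sided) differences and the $\Upsilon_\ell$ are constant along the slice, $\Upsilon_\ell$ in fact commutes with $B$ up to boundary corrections, so either convention agrees in the interior and the corner entries are deferred to the boundary-condition discussion of section~\ref{sec:bc}, exactly as the ``$*$'' entries were deferred in the 2D case. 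A secondary point is simply to make sure the tall-matrix identities are applied with the correct conformability of sizes; this is routine. With these identities in hand the proof is a direct, term-by-term verification, and I would present it compactly by displaying the pointwise approximation of each $w_\ell\,\partial_{x_\ell}u$ and then invoking the collection identities, in complete parallel with the proof of Proposition~\ref{prop:matrixform2D}.
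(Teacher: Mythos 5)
Your proposal follows the paper's proof essentially verbatim: pointwise centered differencing of each convection term on a fixed $z$-slice, followed by collection over the slices via the Kronecker identities for $\mathcal{U}=\sum_k e_{k+1}\otimes U^{(k)}$, which is exactly how the paper arrives at $(\Upsilon_1\otimes\Phi_1B)\mathcal{U}\Psi_1$, $(\Upsilon_2\otimes\Phi_2)\mathcal{U}B^T\Psi_2$ and $[(\Upsilon_3B)\otimes\Phi_3]\mathcal{U}\Psi_3$. One aside in your last paragraph is incorrect, though it does not affect the result: a nonconstant diagonal $\Upsilon_3$ does \emph{not} commute with $B$ even in the interior (compare $(\Upsilon_3 B)_{k,k+1}=\upsilon_3(z_{k})/(2h)$ with $(B\Upsilon_3)_{k,k+1}=\upsilon_3(z_{k+1})/(2h)$); the order $\Upsilon_3 B$ is forced solely by the fact that the coefficient $\upsilon_3(z_k)$ multiplies the already-formed difference quotient --- scaling \emph{after} differencing --- which is what your main argument (and the paper's) actually uses, so the ``either convention agrees'' fallback should be dropped.
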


\begin{proof}
The second order terms of ${\cal L}_h(U)$ correspond to a multiple of (\ref{laplacian3D}).
We are thus left with showing
that the first order term can be expressed by means of the 1D discretization
matrix $B$. We first fix $k=\bar{k}$ and we have
\begin{eqnarray*} 
\phi_1(x_i)\psi_1(y_j)\upsilon_1(z_{\bar{k}}) u_x(x_i,y_j,z_{\bar{k}}) & \approx &
\upsilon_1(z_{\bar{k}})\phi_1(x_i)\frac{u(x_{i+1},y_j,z_{\bar{k}})-u(x_{i-1},y_j,z_{\bar{k}})}{2h} \psi_1(y_j) \\
&=& \frac 1 {2h}\upsilon_1(z_{\bar{k}})\phi_1(x_i) [ -1, 0, 1] 
\begin{bmatrix} U_{i-1,j}^{(\bar{k})} \\ U_{i,j}^{(\bar{k})} \\ U_{i+1,j}^{(\bar{k})} \end{bmatrix} \psi_1(y_j).
\end{eqnarray*}
Analogously,
$$
\phi_2(x_i)\psi_2(y_j)\upsilon_2(z_{\bar{k}})u_y(x_i,y_j,z_{\bar{k}})  \approx
\frac 1 {2h}\upsilon_2(z_{\bar{k}})\phi_2(x_i) 
[ U_{i,j-1}^{(\bar{k})}, U_{i,j}^{(\bar{k})}, U_{i,j+1}^{(\bar{k})}] \begin{bmatrix} -1 \\ 0 \\ 1 \end{bmatrix} 
\psi_2(y_j) .
$$
Collecting these results for all grid nodes $(x_i,y_j,z_{\bar{k}})_{i,j=0,\dots,n}$ and recalling that
$U_{i,j}^{(k)} = U(x_i, y_j,z_k)$, we obtain
$$
(\phi_1(x_i)\psi_1(y_j)\upsilon_1(z_{\bar{k}}) u_{x}(x_i,y_j,z_{\bar{k}}))_{i,j=0, \ldots, n} \approx 
\upsilon_1(z_{\bar{k}})\Phi_1 B U^{(\bar{k})} \Psi_1,$$
and
$$
(\phi_2(x_i)\psi_2(y_j)\upsilon_2(z_{\bar{k}}) u_{y}(x_i,y_j,z_{\bar{k}}))_{i,j=0, \ldots, n} \approx 
\upsilon_2(z_{\bar{k}})\Phi_2  U^{(\bar{k})} B^T \Psi_2.
$$
Therefore, for all $z$ nodes,
\begin{eqnarray*}
&&  (\phi_1(x_i)\psi_1(y_j)\upsilon_1(z_k) u_{x}(x_i,y_j,z_k))_{i,j,k=0,\ldots, n} \\
&& \hskip 0.7in \approx [\Upsilon_1\otimes I]\sum_{k=0}^n(e_{k+1}\otimes\Phi_1 B U^{(k)} \Psi_1) 
=[\Upsilon_1\otimes I](I\otimes\Phi_1B)[\sum_{k=0}^n(e_{k+1}\otimes  U^{(k)})] \Psi_1 \\
&& \hskip 0.7in =[\Upsilon_1\otimes I](I\otimes\Phi_1B)\mathcal{U} \Psi_1 = (\Upsilon_1\otimes\Phi_1B)\mathcal{U} \Psi_1,
\end{eqnarray*}
and
\begin{eqnarray*}
&&  (\phi_2(x_i)\psi_2(y_j)\upsilon_2(z_z) u_{y}(x_i,y_j,z_k))_{i,j,k=0,\ldots, n} \\
&& \hskip 0.7in  \approx[\Upsilon_2\otimes I]\sum_{k=0}^n(e_{k+1}\otimes\Phi_2  U^{(k)} B^T \Psi_2)
  = [\Upsilon_2\otimes I](I\otimes\Phi_2)[\sum_{k=0}^n(e_{k+1}\otimes U^{(k)})]B^T \Psi_2\\
&& \hskip 0.7in  =[\Upsilon_2\otimes I](I\otimes\Phi_2)\mathcal{U}B^T \Psi_2
  = (\Upsilon_2\otimes\Phi_2)\mathcal{U}B^T \Psi_2.
\end{eqnarray*}
On the other hand, for the $z$ direction it holds
\begin{eqnarray*}
\phi_3(x_i)\psi_3(y_j)\upsilon_3(z_k)u_z(x_i,y_j,z_k)&\approx& \upsilon_3(z_k)\phi_3(x_i)\frac{u(x_i,y_j,z_{k+1})-u(x_i,y_j,z_{k-1})}{2h}\\
&\approx& \frac{1}{2h}\upsilon_3(z_k)\phi_3(x_i) [ -1, 0, 1] 
\begin{bmatrix} U_{i,j}^{(k-1)} \\ U_{i,j}^{(k)} \\ U_{i,j}^{(k+1)} \end{bmatrix} \psi_3(y_j).\\
\end{eqnarray*}
Collecting this relation for all blocks, 
\begin{eqnarray*}
&&  \left(\phi_3(x_i)\psi_3(y_j)\upsilon_3(z_k) u_{z}(x_i,y_j,z_k)\right)_{i,j,k=0, \ldots, n}  \\
&& \hskip 0.7in \approx(\Upsilon_3B\otimes I)\sum_{k=0}^n[e_{k+1}\otimes (\Phi_3U^{(k)}\Psi_3)]
  =(\Upsilon_3B\otimes I)(I\otimes\Phi_3)[\sum_{k=0}^n(e_{k+1}\otimes U^{(k)})]\Psi_3\\
&& \hskip 0.7in   =(\Upsilon_3B\otimes I)(I\otimes\Phi_3)\mathcal{U}\Psi_3
  =[(\Upsilon_3B)\otimes\Phi_3]\mathcal{U}\Psi_3.
\end{eqnarray*}
and the result follows.
\end{proof}

Imposing the boundary conditions completely determines the entries of $T$ in
all three instances, as well as the missing entries in $B$.
Following the same steps as for the 2D case,
the matrix equation (\ref{matrix3D}) can be written as
{\footnotesize 
$$
( (I\otimes T_1)+ (T_2^T\otimes I)) \,\, \mathcal{U}+
\mathcal{U}\,\, T_3+
(\Upsilon_1\otimes\Phi_1B_1)\,\, \mathcal{U}\,\,\Psi_1+(\Upsilon_2\otimes\Phi_2)\,\,\mathcal{U}\,\, B_3\Psi_2+
[(\Upsilon_3 B_2^T)\otimes\Phi_3]\,\,\mathcal{U}\,\,\Psi_3=F,
$$ 
}
highlighting the presence of five distinct terms in the matrix equation. 
With this ordering of the variables, it holds that $B_3=B_2$ and $T_3=T_2$.
Note that a tensorial
formulation could also be obtained by further ``unrolling'' the Kronecker products in some of
the terms. All three directions of the tensor would have dimension $n+1$, thus being the natural
generalization of the 2D problem. In future research we will explore the possibility 
of explicitly solving the fully tensorized equation (with three terms), possibly using 
recently developed strategies \cite{Hackbusch.Khoro.Tyrty.05},\cite{Kressner.Tobler.10}.

The matrix equation above may take the form of a standard Sylvester equation
depending on whether some of the coefficients $\phi_\ell, \psi_\ell$ and $\upsilon_\ell$, $\ell=1,2,3$ vanish;
see Example~\ref{ex:3D2}.
In the generic case, a preconditioning operator can still be derived, e.g., by averaging the values
of some of the coefficients, the way it was done in the 2D case.
The new preconditioner then consists of an approximation of the equation (\ref{matrix3D}). 
For instance, by approximating $\Psi_1, \Psi_3$ as
$$
\Psi_1\approx \bar \psi_1 I, \qquad \Psi_3\approx\bar\psi_3 I,
$$
and also using $\Phi_2 \otimes \Upsilon_2 \approx \chi I$,
the following operator is obtained,
%
{\footnotesize
$$
{\cal P} \, : {\cal V} \, \to \, 
\left ( (I\otimes T_1)+ (T_2^T\otimes I) + \bar\psi_1 (\Upsilon_1\otimes\Phi_1B_1) +
\bar\psi_3 [(\Upsilon_3 B_2^T)\otimes\Phi_3]\right )  \mathcal{V}+
\mathcal{V}\, (T_3+ \bar\chi B_3\Psi_2) 
$$
}
whose application entails the solution of a Sylvester equation with coefficient
matrices of size $(n+1)^2\times (n+1)^2$ and $(n+1)\times (n+1)$, respectively.

\section{Numerical experiments}\label{sec:expes}
In this section several numerical experiments are presented using
both two and three dimensional problems.
Performance with respect to the problem parameter -- the viscosity --
and the discretization parameter -- the meshsize -- are considered.

In both dimensional settings we first consider the case when the matrix equation framework can be
used as a solver for the original equation: this corresponds to problems where
the first order term coefficients only depend on the same variable as the corresponding
derivative. Comparisons with either sparse direct solvers or with iterative solvers
are shown.
Then we report on our experience when
using the matrix equation strategy as a preconditioner for more general convection-diffusion
problems with separable coefficients. We compare the performance of the new approach
with that of state-of-the-art algebraic multigrid preconditioners; experimental comparisons with
(less performing) ILU-type preconditioners can be found in \cite{Palitta.Ms}.
More precisely, we shall consider both
the algebraic multigrid preconditioner MI20 \cite{Boyle2007} with GMRES as a solver, and
AGMG \cite{AGMGuserguide.10} with flexible GMRES as a solver (in AGMG all default parameters
were used, while in MI20, {\tt   control.one\_pass\_coarsen} was set to 1).
Both strategies have been shown to be applicable
to convection-diffusion equations, and in particular, for AGMG it was shown in \cite{Notay.12} that this
variable preconditioning strategy is well suited for both 2D and 3D problems.

We stress that our experimental comparisons somewhat penalize our approach, as the
other preconditioners are in fact fortran90 fully compiled codes, for which a mex file
was made available. Their performance is thus expected to be superior to interpreted
Matlab functions, on which our preconditioner is based. Nonetheless, the reported results
show that the new strategy is still competitive.

Except for Example \ref{ex:ifiss},
all problem data were obtained by centered finite difference discretization on the given
domain; for all experiments the grid fineness was chosen so as to avoid spurious oscillations in the
numerical solution for the coarsest grid used.

All experiments were performed with Matlab Version 7.13.0.564 (R2011b) on a
Dell Latitude laptop running ubuntu 14.04 with 4 CPUs at 2.10GHz.

\subsection{Two-dimensional problems}\label{sec:expes2D}
Whenever the convection vector has the simplified form
$\mathbf{w}=(w_1,w_2)=(\phi_1(x), \psi_2(y))$, that is $\psi_1$ and $\phi_2$ are constant
functions, the matrix formulation (\ref{eqn:matrix1}) reduces
to 
$$
\epsilon T_1U+\epsilon UT_2 + (\Phi_1B_1)U+U(B_2\Psi_2) = F ,
$$
which is a Sylvester matrix equation, and can thus be solved directly, with no further
approximation. In the first two examples, we thus consider equations leading to this
simplified form, and we compare the performance of either two direct solvers -- in the
matrix and vector equation regimes respectively, or of two iterative solvers.

\begin{example}\label{ex:1}
{\rm
We consider the convection-diffusion (\ref{eqn:convdiff})
with $\Omega=(0,1)^2$, $f=0$, $\mathbf{w}=(1+(x+1)^2/4,0),$
and the following Dirichlet boundary conditions:
$$\left\{\begin{array}{lc}
         u(x,0)=1 \quad x\in[0,1],\\
         u(x,1)=0\quad x\in[0,1]. \\
        \end{array}\right.
$$
With these data, equation (\ref{eqn:matrix1}) reduces to
$\epsilon T_1U+\epsilon UT_2 + (\Phi_1B_1) U = F$,
that is to the following Sylvester equation,
\begin{equation}\label{eqes1}
(\epsilon T_1+ \Phi_1B_1)U+\epsilon UT_2  = F  ,
\end{equation}
where the right-hand side $F$ contains the Dirichlet boundary conditions (see section~\ref{sec:bc}).
We solve this matrix equation using the {\tt lyap} function in Matlab, from
the Control Toolbox, and its ``vectorized'' version 
(i.e. its Kronecker formulation) by means of a sparse direct solver (Matlab backslash operation).
The results are reported in Table~\ref{tab:1}(left), for a variety of mesh dimensions and
viscosity values; the same number of grid nodes, $n_x$ and $n_y$, was used  
in the $x$ and $y$ directions, respectively.
The numbers show the superiority of the use of the (dense) Sylvester solver, compared with
that of a general-purpose sparse direct solver. 

The right portion of Table~\ref{tab:1} reports the same type of experiments for the
rectangular domain $\Omega=(0,1)\times (0,2)$. A grid consisting of small squares was still used,
so that a different number of nodes in the two directions was imposed.
Both methods are sensitive to the unbalanced dimension growth in the two directions, although
the matrix oriented approach is still faster. Note that for the largest grid, the performance
of the dense solver {\tt lyap} deteriorates more significantly. For such large problems, an
iterative solver should be preferred.
}
\end{example}

\begin{table}[htb]
\centering
\begin{minipage}{5.5cm}
 {\footnotesize
\begin{tabular}{|r|r|r|r|} 
\hline
$\epsilon$ & $n_x$ &  Vect. &  Matrix \\
&  & {\sc cpu} time  & {\sc cpu} time  \\
\hline
 0.0333  &     65        &        0.02      &        0.008 \\
 0.0333  &    129        &        0.07      &        0.030 \\
 0.0333  &    257        &        0.37      &        0.159 \\
 0.0333  &    513        &        2.01      &        0.898 \\
 0.0333  &  1025         &       10.95      &        6.389 \\
\hline
 0.0167  &     65        &        0.02      &        0.005 \\
 0.0167  &    129        &        0.08      &        0.029 \\
 0.0167  &    257        &        0.39      &        0.155 \\
 0.0167  &    513        &        1.91      &        0.899 \\
 0.0167  &   1025        &       10.95      &        6.354 \\
\hline
 0.0083  &     65        &        0.01      &        0.005 \\
 0.0083  &    129        &        0.08      &        0.032 \\
 0.0083  &    257        &        0.38      &        0.154 \\
 0.0083  &    513        &        1.92      &        0.891 \\
 0.0083  &  1025         &       11.03      &        6.443 \\
\hline
\end{tabular}
}
\end{minipage}
\,\,
\begin{minipage}{6.5cm}
{\footnotesize
\begin{tabular}{|r|r|r|r|r|} 
\hline
$\epsilon$ & $n_x$ & $n_y$ &  Vect. &  Matrix \\
&&& {\sc cpu} time  & {\sc cpu} time  \\
\hline
 0.0333  &     65  &    130      &    0.04       &       0.025 \\
 0.0333  &    129  &    258      &    0.15       &       0.081 \\
 0.0333  &    257  &    514      &    0.76       &       0.415 \\
 0.0333  &    513  &   1026      &    4.28       &       2.410 \\
 0.0333  &   1025  &   2050      &   30.25       &      24.942 \\
\hline
 0.0167  &     65  &    130      &    0.03       &       0.016 \\
 0.0167  &    129  &    258      &    0.15       &       0.093 \\
 0.0167  &    257  &    514      &    0.70       &       0.435 \\
 0.0167  &    513  &   1026      &    5.18       &       2.376 \\
 0.0167  &   1025  &   2050      &   35.31       &      28.691 \\
\hline
 0.0083  &     65  &    130      &    0.03       &       0.018 \\
 0.0083  &    129  &    258      &    0.16       &       0.087 \\
 0.0083  &    257  &    514      &    0.88       &       0.631 \\ 
 0.0083  &    513  &   1026      &    5.15       &       3.245 \\
 0.0083  &   1025  &   2050      &   29.99       &      24.735 \\
\hline 
\end{tabular}
}
\end{minipage}
\caption{Example \ref{ex:1}. Total CPU time for a sparse direct solver and the function {\tt lyap}, as
the viscosity and the problem dimension change. Left: square domain. Right: rectangular domain with
same mesh size and different number of nodes in $x$ and $y$ directions.  \label{tab:1}}
\end{table}

\begin{table}[htb]
\centering
{\footnotesize
\begin{tabular}{|r|r|r|r|r|} 
\hline
$\epsilon$ & $n_x$ & FGMRES+AGMG & GMRES+MI20 & KPIK \\
&& {\sc cpu} time   (\# its) & {\sc cpu} time   (\# its) & {\sc cpu} time (\# its)  \\
\hline
 0.0333  &    129 &    0.1649 (13)    &            0.2843  ( 7)    &          0.2131(24) \\
 0.0333  &    257 &    0.3874 (15)    &            0.5715  ( 8)    &          0.2817(32) \\
 0.0333  &   1025 &   11.4918 (20)    &            8.4540  ( 8)    &          1.5002(44) \\
 0.0333  &   1200 &   12.5441 (17)    &            8.7843  ( 7)    &          1.9722(46) \\
\hline
 0.0167  &    129 &    0.2150 (14)    &            0.2750  ( 7)    &          0.1638(22) \\
 0.0167  &    257 &    0.4356 (14)    &            0.6533  ( 9)    &          0.3628(32) \\
 0.0167  &    513 &    2.0712 (15)    &            2.2171  ( 9)    &          0.6324(38) \\
 0.0167  &   1025 &   11.5428 (18)    &            8.0454  ( 8)    &          2.8454(64) \\
 0.0167  &   1200 &   13.2109 (18)    &            9.5501  ( 8)    &          2.1961(52) \\
\hline
 0.0083  &    129 &    0.1501 (14)    &            0.2685  (10)    &          0.1394(22) \\
 0.0083  &    257 &    0.3651 (15)    &            0.5871  ( 8)    &          0.2885(34) \\
 0.0083  &    513 &    1.6615 (14)    &            2.1814  (10)    &          0.5439(42) \\
 0.0083  &   1025 &   10.0859 (18)    &           10.6729  (11)    &          2.7800(66) \\
 0.0083  &   1200 &   14.4866 (18)    &           11.0856  ( 9)    &          2.8459(58) \\
\hline
\end{tabular}
}
\caption{Example \ref{ex:3}. Total CPU time and number of iterations
for solving the Sylvester equation by iterative methods.
  \label{tab:3}}
\end{table}

\begin{example}\label{ex:3}
{\rm
We consider solving the algebraic problem stemming from the equation
of Example \ref{ex:1} (in $\Omega=(0,1)^2$)
by means of an iterative solver. In the case of the linear system (the Kronecker version of
(\ref{eqes1})), both MI20 and AGMG were considered as preconditioners.
As discussed in section~\ref{sec:implementation}, the iterative solver for 
the Sylvester equation (\ref{eqes1}) was the extended Krylov subspace method
\cite{Simoncini2007b} (KPIK in the following).
In both solvers, the stopping criterion was based on the relative residual 2-norm, with
stopping tolerance equal to $10^{-8}$. Numerical results are reported in Table \ref{tab:3} as the
viscosity and number of grid nodes in each direction change.
We observe that the number of iterations varies for all methods as the mesh is refined, and 
in a more significant manner for KPIK. However, we recall from section \ref{sec:implementation} that
memory requirements in this case are not an issue, as the extended Krylov subspace actually generated is
a subset of $\RR^{n_x}$.  For the two algebraic multigrid
preconditioners, a mildly varying number of iterations with respect to $n_x$ has been largely observed in
the literature, at least experimentally. All methods are not very sensitive to changes in viscosity.
In terms of CPU times, the method based on the matrix equation shows the best performance, especially as
the problem size increases. In summary, iteratively solving the problem by resorting to its matrix equation
formulation pays off in this case. A comparison with the timings of Table \ref{tab:1} is also
of interest. Note that for the largest size, the iterative solvers should be preferred, whereas
for most other sizes, especially in the linear system formulation, the direct solver is superior.
This fact is typical of two-dimensional problems, for which sparse direct solvers remain attractive
over iterative ones, also for very fine discretizations when the Kronecker formulation is used.
}
\end{example}

\vskip 0.05in

In the next two  examples we consider problems where the convection coefficients both depend
on both variables, although in a separable manner. The matrix-equation-based
strategy is then used as a preconditioner, in which the original operator
 is approximated via a Sylvester operator.

\begin{example}\label{ex:ifiss}
{\rm
We consider the convection-diffusion equation in $\Omega=(0,1)^2$ with
$$
\mathbf{w}=(2y(1-x^2),-2x(1-y^2)),
$$
and the Dirichlet boundary conditions $u(0,y)=0$, $u(1,y)=0$, $u(x,0)=0$ and $u(x,1)=1$. 
%
%
This is a simple model for the temperature distribution in a cavity with a `hot' external wall ($\{1\}\times [0,1]$) 
and the wind characterized by $\mathbf{w}$ determines a recirculating flow; this model was used as a test
example in \cite{ElmanetalbookIIed.2014}. 
Data were generated with the IFISS package \cite{Elman.Ramage.Silvester.07},
which uses a uniform finite element discretization on a rectangular grid.
The coefficient matrices of the Sylvester operator preconditioner were generated
by finite differences and the corresponding equation solved with KPIK. 
Thanks to the chosen uniform discretization in IFISS, the
obtained preconditioning operator was still effective, in spite of the different discretization
technique. 
Numerical results are reported in Table \ref{tab:ifiss}, where a stopping tolerance of $10^{-6}$ was
used. The preconditioner MI20 broke down several times on this example, therefore its results
where not included.  Instead, the last column shows timings for the Matlab default sparse
solver (backslash operator).

The numbers in the table show a very good performance of the new approach with respect
to the number of iterations, but timings are in general higher than with AGMG preconditioning;
 better performance of FGMRES+KPIK may be obtained with a compiled code.
Note that both iterative solvers are largely superior to the sparse direct method.
}
\end{example}

\begin{table}[!ht]
\centering
{\footnotesize
\begin{tabular}{|r|r|r|r|r|} 
\hline
$\epsilon$ & $n_x$ &FGMRES+AGMG &  FGMRES+KPIK & DIRECT\\
&& {\sc cpu} time (\# its) & {\sc cpu} time (\# its) & {\sc cpu} time (\# its) \\
\hline
 0.0050   &   129  &   0.0377 ( 6)        &              0.1301 (10)  &       0.0785 \\
 0.0050   &   257  &   0.1216 ( 6)        &              0.2509 ( 9)  &       0.4338 \\
 0.0050   &   513  &   0.5970 ( 6)        &              0.7709 ( 8)  &       3.3971 \\
 0.0050   &  1025  &   3.1946 ( 7)        &              6.0621 ( 8)  &      19.0872 \\
\hline
 0.0025   &   129  &   0.0634 ( 7)        &              0.1306 (10)  &       0.0824 \\
 0.0025   &   257  &   0.1073 ( 5)        &              0.2173 ( 9)  &       0.4223 \\
 0.0025   &   513  &   0.5029 ( 5)        &              0.7151 ( 8)  &       2.4794 \\
 0.0025   &  1025  &   2.5261 ( 6)        &              5.3601 ( 7)  &      16.1251 \\
\hline
 0.0013   &   129  &   0.1882 (10)        &              0.1686 (10)  &       0.1117 \\
 0.0013   &   257  &   0.2531 ( 7)        &              0.2283 ( 9)  &       0.4317 \\
 0.0013   &   513  &   0.5075 ( 5)        &              0.7088 ( 8)  &       3.2329 \\
 0.0013   &  1025  &   2.1558 ( 5)        &              3.9864 ( 6)  &      24.6895 \\
\hline
\end{tabular}
}
\caption{Example \ref{ex:ifiss}. CPU time and number of iterations for different preconditioners and
a sparse direct solver, as the viscosity and mesh parameter vary.  \label{tab:ifiss}}
\end{table}

\begin{example}\label{ex:elmanramage}
{\rm
We consider  a variant of Example V in \cite{H.C.ELMAN2002}
in $\Omega=(0,1)^2$, where the convection vector
was modified so as to obtain a divergence free convection term, and so as not to have zero mean,
namely
$$
\mathbf{w}=(y(1-(2x+1)^2),-2(2x+1)(1-y^2)),
$$
together with zero Dirichlet boundary conditions except for the side $y=0$ where
$$\left\{\begin{array}{lc}
         u(x,0)=1 + \tanh[10+20(2x-1)], \quad & 0\leq x \leq 0.5, \\
         u(x,0)=2, \quad & 0.5< x \leq 1.
        \end{array}\right.
$$
The problem was discretized using centered finite differences, and the grid was
selected fine enough so as not to have spurious oscillations.
As in the previous example, we compare the performance of various preconditioners, as
the viscosity and the mesh parameter change. 
 Results are reported in Table~\ref{tab:elmanramage}, where the stopping tolerance $10^{-6}$
was used.

This is a recognized hard problem due to the strong layer appearing near the boundary $y=0$.
This appears to be the only problem where FGMRES+AGMG gives much worse results than the other
methods, both in terms of variability on the number of iterations, and in terms of CPU time.
On the other hand, MI20 and KPIK worked very well as preconditioners, with an
either constant or even decreasing number of iterations, and lower CPU times, with
somewhat lower values for the matrix-equation-based approach.
}
\end{example}

\begin{table}[htb]
\centering
{\footnotesize
\begin{tabular}{|r|r|r|r|r|} 
\hline
$\epsilon$ & $n_x$ &FGMRES+AGMG & GMRES+MI20 &  FGMRES+KPIK\\
&& {\sc cpu} time (\# its) & {\sc cpu} time (\# its) & {\sc cpu} time (\# its) \\
\hline
 0.1000   &   128  &   0.1081 (11)      &          0.1989  ( 4)      &        0.5743( 8) \\
 0.1000   &   256  &   0.3335 (14)      &          0.3812  ( 4)      &        0.5351( 7) \\
 0.1000   &   512  &   1.0773 (11)      &          1.1731  ( 4)      &        1.0543( 7) \\
 0.1000   &  1024  &   9.2493 (17)      &          4.3287  ( 4)      &        2.6372( 6) \\
 0.1000   &  2048  &  52.0430 (15)      &         19.6757  ( 4)      &       16.7394( 5) \\
\hline
 0.0500   &   128  &   0.0936 ( 9)      &          0.2269  ( 4)      &        0.5168(10) \\
 0.0500   &   256  &   0.2897 (12)      &          0.3862  ( 4)      &        0.6455( 9) \\
 0.0500   &   512  &   1.2603 (11)      &          1.2380  ( 4)      &        1.1769( 8) \\
 0.0500   &  1024  &  10.3623 (18)      &          4.3345  ( 4)      &        3.0812( 7) \\
 0.0500   &  2048  &  60.5041 (17)      &         20.4056  ( 4)      &       14.9237( 6) \\
\hline
 0.0333   &   128  &   0.0882 ( 8)      &          0.2368  ( 4)      &        0.6428(11) \\
 0.0333   &   256  &   0.2181 ( 9)      &          0.4218  ( 4)      &        0.8149(11) \\
 0.0333   &   512  &   1.5849 (14)      &          1.1977  ( 4)      &        1.3786( 9) \\
 0.0333   &  1024  &   5.4624 (12)      &          4.4130  ( 4)      &        3.7214( 8) \\
 0.0333   &  2048  & 120.9686 (23)      &         20.1120  ( 4)      &       17.9188( 7) \\
\hline
   \end{tabular}
}
\caption{Example \ref{ex:elmanramage}. CPU time and number of iterations for different preconditioners,
as the viscosity and mesh parameter vary.  \label{tab:elmanramage}}
\end{table}

\subsection{Three-dimensional problems}
Like in the previous section, we first report on the case where the discretized problem can
be directly solved as a Sylvester equation, and then  on the case when the Sylvester equation serves
as preconditioner for the more involved problem.

\begin{example}\label{ex:3D2}
{\rm
We consider the 3D problem $-\epsilon\Delta u+\mathbf{w}\cdot \nabla u=1$,  in $\Omega=(0,1)^3$,
with  convection term
$$\mathbf{w}= (x \sin x , y \cos y, e^{z^2-1}),$$
and zero Dirichlet boundary conditions. 
After a centered finite difference discretization with $n_x=n_y=n_z$ nodes in each
direction, the equation (\ref{matrix3D}) takes the form
$$
(I\otimes T_1)\,\,\mathcal{U}+\mathcal{U}\,\,T_3+(T_2^T\otimes I)\,\,\mathcal{U}+(I\otimes\Phi_1B_1)\,\,
\mathcal{U}+\mathcal{U}\,\,B_3\Upsilon_3+[(\Psi_2B_2)^T\otimes I]\,\,\mathcal{U}={\mathbf 1}{\mathbf 1}^T,
$$
where ${\mathbf 1}$ is the vector of all ones,
and this corresponds to the following Sylvester equation in $\mathcal U$,
\begin{equation}\label{silv_ex3D}
[ I\otimes (T_1+\Phi_1B_1)+(T_2+\Psi_2B_2)^T\otimes I ]\,\, \mathcal{U}+\mathcal{U}\,\, (T_3+B_3\Upsilon_3 )={\mathbf 1}{\mathbf 1}^T .
\end{equation}
Note that the fact that the forcing term in the original equation yields a low rank right-hand
side in the matrix equation is crucial for the solution process.

We solved this matrix equation by means of KPIK, and compared its performance with the
solution of the corresponding linear system (via its Kronecker formulation) with
GMRES preconditioned by MI20, and with flexible GMRES preconditioned by AGMG. Results
are reported in Table~\ref{tab:3D2}, with stopping tolerance equal to $10^{-9}$.

As in the corresponding 2D problem (Table \ref{ex:3}), the number of iterations is
quite stable for both algebraic multigrid preconditioners, whereas it varies,
though very mildly, when KPIK is used. On the other hand, CPU time is very much
in favor of the matrix equation solver, whose computation exploited the lower order 
complexity of the problem. One order of magnitude lower timings can be observed
in some instances. All strategies seem not to be sensitive to the
choice of viscosity in the considered range.
}
\end{example}

\begin{table}[ht]
\centering
{\footnotesize
\begin{tabular}{|r|r|r|r|r|r|} 
\hline
$\epsilon$ & $n_x$ &FGMRES+AGMG & GMRES+MI20 & KPIK \\
&& {\sc cpu} time (\# its) & {\sc cpu} time (\# its) & {\sc cpu} time (\# its)  \\  
\hline
 0.0050  &     50  &   0.6044   (13)     &    1.0591  ( 6)  &   0.1353  (18)     \\
 0.0050  &     60  &   1.1250   (14)     &    1.8022  ( 7)  &   0.1734  (18)     \\
 0.0050  &     70  &   2.0385   (14)     &    3.4253  ( 7)  &   0.2326  (20)     \\
 0.0050  &     80  &   3.4803   (14)     &    4.4297  ( 7)  &   0.3583  (20)     \\
 0.0050  &     90  &   5.7324   (15)     &    6.8705  ( 7)  &   0.4999  (22)     \\
 0.0050  &    100  &   8.0207   (15)     &    9.7207  ( 7)  &   0.5677  (22)     \\
\hline
 0.0010  &     50  &   0.6556   (13)     &    1.1306  ( 6)  &   0.1811  (18)     \\
 0.0010  &     60  &   1.3011   (14)     &    1.7854  ( 7)  &   0.2386  (18)     \\
 0.0010  &     70  &   1.9509   (14)     &    2.7829  ( 7)  &   0.2346  (20)     \\
 0.0010  &     80  &   3.5291   (14)     &    4.6576  ( 7)  &   0.4096  (20)     \\
 0.0010  &     90  &   5.1344   (14)     &    6.8176  ( 7)  &   0.4253  (22)     \\
 0.0010  &    100  &   7.6815   (14)     &    9.4935  ( 7)  &   0.5446  (22)     \\
\hline
 0.0005  &     50  &   0.7039   (14)     &    1.0530  ( 6)  &   0.1751  (16)     \\
 0.0005  &     60  &   1.2560   (14)     &    1.7341  ( 6)  &   0.2314  (18)     \\
 0.0005  &     70  &   2.2242   (14)     &    2.9667  ( 7)  &   0.2301  (20)     \\
 0.0005  &     80  &   3.4558   (14)     &    4.5964  ( 7)  &   0.3472  (22)     \\
 0.0005  &     90  &   4.8076   (14)     &    6.4841  ( 7)  &   0.4257  (22)     \\
 0.0005  &    100  &   7.3914   (14)     &    9.6274  ( 7)  &   0.5927  (24)     \\
\hline
   \end{tabular}
}
\caption{Example \ref{ex:3D2}. CPU time and number of iterations for different preconditioners,
as the viscosity and mesh parameter vary.  \label{tab:3D2}}
\end{table}

\begin{example}\label{ex:3D3}
{\rm
Finally, we consider the 3D convection-diffusion equation in $\Omega=(0,1)^3$, with
$f=1$ and convection vector 
$$\mathbf{w}= (yz(1-x^2),0,e^{z}),$$
and zero Dirichlet boundary conditions. 
Due to the structure of $\mathbf{w}$, we can treat separately the $(z)$ variable and
the $(x,y)$ variables\footnote{Other variable aggregations are possible. The one we chose
allowed us to explicitly treat the first derivative in the $z$ direction 
in the preconditioner.}. With this variable ordering in mind, we can define
${\cal U}$ of size $n_z \times (n_x n_y)$, and the following corresponding matrix equation
%
\begin{equation}\label{ex:3Dprec}
T_3 {\cal U} + {\cal U} ( I\otimes T_2 + T_1^T \otimes I) + \Upsilon_3 B_3 {\cal U}  +
\Upsilon_1 {\cal U} ( I\otimes \Phi_1 B_1) = {\mathbf 1}{\mathbf 1}^T .
\end{equation}
With the approximation $\Upsilon_1\approx \bar\upsilon_1 I$,
 the matrix-equation-based preconditioner is given by  
\begin{equation}\label{syl_3D_prec}
{\cal P} \, : \,{\cal V}\, \to \, 
(T_3+ \Upsilon_3 B_3) {\cal V} + {\cal V} ( I\otimes T_2 + T_1^T \otimes I
+  I\otimes \bar\upsilon_1\Phi_1 B_1)  ,
\end{equation}
with coefficient matrices of dimension $n_z\times n_z$ and $n_xn_y \times n_xn_y$.
We compare the performance of flexible GMRES with this preconditioner (the Sylvester equation
is iteratively solved as described in section \ref{sec:implementation}),
with that of flexible GMRES preconditioned by AGMG, and of GMRES with MI20, both applied
to the corresponding linear system. The stopping tolerance was set to $10^{-9}$. 
The results of our numerical experiments are shown in Table~\ref{tab:3D3}.
We readily notice that GMRES+MI20 did not perform well, and we had to stop the
solution process for the two smaller values of the viscosity, due to the
excessive CPU time necessary for building the preconditioner.
The other preconditioning strategies perform comparably all the way up to
$10^6$ unknowns, with a steadily
lower number of iterations for the matrix-equation-based preconditioner, which
in turns implies lower memory requirements of flexible GMRES.
In terms of CPU time performance is comparable, although the new strategy
usually requires lower time; we expect that the use of a compiled implementation
 of this new strategy will make this difference more remarkable.
}
\end{example}

\begin{table}[!ht]
\centering
{\footnotesize
\begin{tabular}{|r|r|r|r|r|} 
\hline
$\epsilon$ & $n_x$ &FGMRES+AGMG  & GMRES+MI20 & FGMRES+KPIK\\
\hline
 0.5000   &    50 &    0.6239   (15)   &      0.9182  ( 6)  &   0.9291  ( 6) \\
 0.5000   &    60 &    1.2095   (15)   &      1.8236  ( 7)  &   1.2027  ( 6) \\
 0.5000   &    70 &    2.1041   (15)   &      3.1649  ( 7)  &   1.6585  ( 6) \\
 0.5000   &    80 &    3.7370   (16)   &      4.9765  ( 7)  &   2.4943  ( 6) \\
 0.5000   &    90 &    7.5874   (16)   &      9.2040  ( 8)  &   3.2513  ( 6) \\
 0.5000   &   100 &    7.7626   (16)   &     11.9912  ( 8)  &   4.7548  ( 6) \\
\hline
 0.1000   &    50 &    0.8041   (17)   &     75.8874  ( 6)  &   1.4610  ( 8) \\
 0.1000   &    60 &    2.1310   (18)   &      - ( -)  &   1.5299  ( 8) \\
 0.1000   &    70 &    2.8043   (18)   &      - ( -)  &   1.8926  ( 8) \\
 0.1000   &    80 &    5.1219   (19)   &      - ( -)  &   3.2928  ( 9) \\
 0.1000   &    90 &    7.3179   (19)   &      - ( -)  &   4.6429  ( 9) \\
 0.1000   &   100 &    9.5759   (19)   &      - ( -)  &   6.5590  ( 9) \\
\hline
 0.0500   &    50 &    0.6780   (17)   &     158.5447  ( 6) &   1.1997  (10) \\
 0.0500   &    60 &    1.4318   (18)   &      - ( -)  &   1.7296  (10) \\
 0.0500   &    70 &    2.8427   (19)   &      - ( -)  &   2.5215  (10) \\
 0.0500   &    80 &    4.9616   (20)   &      - ( -)  &   3.6615  (10) \\
 0.0500   &    90 &    7.1038   (20)   &      - ( -)  &   5.0098  (10) \\
 0.0500   &   100 &   10.7181   (21)   &      - ( -)  &   6.8661  (10) \\
\hline
   \end{tabular}
}
\caption{Example \ref{ex:3D3}. CPU time and number of iterations for different preconditioners,
as the viscosity and mesh parameter vary. ``-'' stands for excessive CPU time in building the
preconditioner. \label{tab:3D3}}
\end{table}

\section{Conclusions and outlook}\label{sec:conclusions}
We have implemented a new matrix-equation-based strategy for solving or
preconditioning a variety of two and three-dimensional
convection-diffusion problems. Our preliminary numerical experiments
show that the new approach performs comparably well with respect to state-of-the-art
algebraic multigrid preconditioners.
As opposed to earlier attempts with this type of approaches, we have shown that 
recently developed Sylvester equation solvers can make the matrix equation
strategy very appealing whenever the original partial differential equation
has separable coefficients.

Our current implementation is limited to the use of uniform meshes for
rectangular or parallelepipedal domains. We plan to generalize the approach to more general
settings in the future. The restriction to separable convection terms can be
relaxed by appropriately approximating the operator at the preconditioning level,
by designing special corresponding separable approximations. These possibilities
will also be explored.

Our description could be generalized to tensor structures with more than three
dimensions, as they occur in many emerging applications 
\cite{Hackbusch.Khoro.Tyrty.05},\cite{Khoromoskij.12},\cite{Kressner.Tobler.10}.  We will 
explore the possibility of extending our
algorithmic methodology to this setting, taking into account the role of the
convection terms in this more advanced framework.

\bibliography{%
/home/valeria/Dropbox/PalittaSimoncini/TESI/bozza_tesi/BIBTEX,%
/home/valeria/Bibl/Biblioteca}

\begin{thebibliography}{10}

\bibitem{Axelsson.Karatson.03}
O.~Axelsson and J.~Kar{\'a}tson.
\newblock Symmetric part preconditioning for the conjugate gradient method in
  {H}ilbert space.
\newblock {\em Numerical Functional Analysis and Optimization},
  24(5-6):455--474, 2003.

\bibitem{Axelsson.Karatson.07}
O.~Axelsson and J.~Kar{\'a}tson.
\newblock Mesh independent superlinear {PCG} rates via compact-equivalent
  operators.
\newblock {\em SIAM J. Numer. Anal.}, 45(4):1945--1516, 2007.

\bibitem{Bartels.Stewart.72}
R.~H. Bartels and G.~W. Stewart.
\newblock {A}lgorithm 432: {S}olution of the {M}atrix {E}quation {$AX+XB=C$}.
\newblock {\em Comm. of the ACM}, 15(9):820--826, 1972.

\bibitem{Bickley.McNamee.60}
W.~G. Bickley and J.~McNamee.
\newblock Matrix and other direct methods for the solution of linear difference
  equation.
\newblock {\em {Philos. Trans. Roy. Soc. London Ser. A}}, 252:69--131, 1960.

\bibitem{Boyle2007}
J.~Boyle, M.~D. Mihajlovi\'c, and J.A. Scott.
\newblock {HSL\_MI20: an efficient AMG preconditioner for finite element
  problems in 3D}.
\newblock {\em Int. J. Numer. Meth. Engnrg,}, 82(1):64--98, 2010.

\bibitem{Breitenetal.2014}
Tobias Breiten, Valeria Simoncini, and Martin Stoll.
\newblock Fast iterative solvers for fractional differential equations.
\newblock Technical report, Alma Mater Studiorum - Universit{\`a} di Bologna,
  2014.

\bibitem{Chinetal.84}
R.C.Y. Chin, T.A. Manteuffel, and J.~De Pillis.
\newblock {ADI} as a preconditioning for solving the convection-diffusion
  equation.
\newblock {\em {SIAM J. Sci. Stat. Comput.}}, 5(2):281--299, 1984.

\bibitem{Ellner1986}
Nancy~S. Ellner and Eugene~L. Wachspress.
\newblock New {ADI} model problem applications.
\newblock In {\em Proceedings of 1986 ACM Fall joint computer conference,
  Dallas, Texas, United States}, pages 528--534. IEEE Computer Society Press,
  Los Alamitos, CA, 1986.

\bibitem{Elman.Golub.90}
H.~C. Elman and G.~H. Golub.
\newblock Iterative methods for cyclically reduced non-self-adjoint linear
  systems.
\newblock {\em Math. Comp.}, 54(190):671--700, April 1990.

\bibitem{H.C.ELMAN2001}
H.~C. Elman and A.~Ramage.
\newblock A characterisation of oscillations in the discrete two-dimensional
  convection-diffusion equation.
\newblock {\em Mathematics of Computation}, 72, No. 241:263--288, 2001.

\bibitem{H.C.ELMAN2002}
H.~C. Elman and A.~Ramage.
\newblock An analysis of smoothing effects of upwinding strategies for the
  convection-diffusion equation.
\newblock {\em SIAM J. Numer. Anal.}, 40, No. 1:254--281, 2002.

\bibitem{Elman.Ramage.Silvester.07}
H.~C. Elman, A.~Ramage, and D.~J. Silvester.
\newblock {IFISS}: A matlab toolbox for modelling incompressible flow.
\newblock {\em ACM Transactions on Mathematical Software}, 33(2):no.14, 2007.

\bibitem{Elman.Schultz.86}
Howard~C. Elman and Martin~H. Schultz.
\newblock Preconditioning by fast direct methods for nonself-adjoint
  nonseparable elliptic equations.
\newblock {\em SIAM Journal on Numerical Analysis}, 23(1):44--57, Feb. 1986.

\bibitem{ElmanetalbookIIed.2014}
Howard~C. Elman, David~J. Silvester, and Andrew~J. Wathen.
\newblock {\em Finite Elements and Fast Iterative Solvers, with applications in
  incompressible fluid dynamics}, volume~21 of {\em Numerical Mathematics and
  Scientific Computation}.
\newblock Oxford University Press, ii edition, 2014.

\bibitem{Gunn.64}
J.~E. Gunn.
\newblock The numerical solution of $\nabla \cdot a \nabla u = f$ by a
  semi-explicit alternating-direction iterative technique.
\newblock {\em Numer. Math.}, 6:181--184, 1964.

\bibitem{Hackbusch.Khoro.Tyrty.05}
W.~Hackbusch, B.~N. Khoromskij, and E.~E. Tyrtyshnikov.
\newblock Hierarchical {K}ronecker tensor-product approximations.
\newblock {\em J. Numer. Math.}, 13(2):119--156, 2005.

\bibitem{Khoromoskij.12}
B.~N. Khoromskij.
\newblock Tensors-structured numerical methods in scientific computing: survey
  on recent advances.
\newblock {\em Chemometrics and Intelligent Laboratory systems}, 110:1--19,
  2012.

\bibitem{Kressner.Tobler.10}
D.~Kressner and C.~Tobler.
\newblock {K}rylov subspace methods for linear systems with tensor product
  structure.
\newblock {\em SIAM J. Matrix Anal. Appl.}, 31(4):1688--1714, 2010.

\bibitem{Manteuffel.Otto.93}
T.~Manteuffel and J.~Otto.
\newblock Optimal equivalent preconditioners.
\newblock {\em SIAM J. Numer. Anal.}, 30(3):790--812, June 1993.

\bibitem{Manteuffel.Parter.90}
T.~A. Manteuffel and S.~V. Parter.
\newblock Preconditioning and boundary conditions.
\newblock {\em SIAM J. Numer. Anal.}, 27(3):656--694, 1990.

\bibitem{AGMGuserguide.10}
Y.~Notay.
\newblock {\em User’s Guide to {AGMG}}.
\newblock Service de M{\'e}trologie Nucl{\'e}aire Universit{\`e} Libre de
  Bruxelles (C.P. 165/84), 50, Av. F.D. Roosevelt, B-1050 Brussels, Belgium,
  third edition, March 2010.

\bibitem{Notay.12}
Yvan Notay.
\newblock Aggregation-based algebraic multigrid for convection-diffusion
  equations.
\newblock {\em SIAM J. Sci. Comput.}, 34(4), 2012.

\bibitem{Palitta.Ms}
Davide Palitta.
\newblock Preconditioning strategies for the numerical solution of
  convection-diffusion partial differential equations.
\newblock Master's thesis, Alma Mater Studiorum - Universit\`a di Bologna,
  2014.

\bibitem{Saad1993}
Y.~Saad.
\newblock A flexible inner-outer preconditioned {GMRES}.
\newblock {\em SIAM J. Sci. Comput.}, 14:461--469, 1993.

\bibitem{SaadJuly1986}
Y.~Saad and M.~H. Schultz.
\newblock {G{M}{R}{E}{S}}: A generalized minimal residual algorithm for solving
  nonsymmetric linear systems.
\newblock {\em SIAM J. Sci. Statist. Comput.}, 7(3):856--869, July 1986.

\bibitem{Simoncini1996b}
V.~Simoncini.
\newblock On the numerical solution of ${AX-XB=C}$.
\newblock {\em BIT}, 36(4):814--830, 1996.

\bibitem{Simoncini2007b}
V.~Simoncini.
\newblock A new iterative method for solving large-scale {Lyapunov} matrix
  equations.
\newblock {\em SIAM J. Sci. Comput.}, 29(3):1268--1288, 2007.

\bibitem{Simoncini.survey13}
V.~Simoncini.
\newblock Computational methods for linear matrix equations.
\newblock Technical report, Alma Mater Studiorum - Universit{\`a} di Bologna,
  2013.

\bibitem{Starke.91b}
G.~Starke.
\newblock Optimal alternating direction implicit parameters for nonsymmetric
  systems of linear equations.
\newblock {\em SIAM J. Numer. Anal.}, 28(5):1431--1445, 1991.

\bibitem{Stynes.13}
Martin Stynes.
\newblock Numerical methods for convection-diffusion problems or the 30 years
  war.
\newblock arXiv: 1306.5172v1, Department of Mathematics, National University of
  Ireland, 2013.

\bibitem{Wachspress1966}
E.~L. Wachspress.
\newblock {\em Iterative solution of elliptic systems}.
\newblock Prentice-Hall, Inc., Englewood Cliffs, N.J., 1966.

\bibitem{Wachspress.63}
Eugene~L. Wachspress.
\newblock Extended application of {Alternating Direction Implicit} iteration
  model {Problem} theory.
\newblock {\em Journal of the {Society for Industrial and Applied
  Mathematics}}, 11(4):994--1016, 1963.

\bibitem{Wachspress.84b}
Eugene~L. Wachspress.
\newblock Generalized {ADI} preconditioning.
\newblock {\em Computers \& Mathematics with Applications}, 10(6):405--477,
  1984.

\end{thebibliography}

\end{document}